\newtheorem{thm}{Theorem}[section]
\newtheorem{cor}[thm]{Corollary}
\newtheorem{defn}[thm]{Definition}
\newtheorem{ex}[thm]{Example}
\newtheorem{rem}[thm]{Remark}
\newtheorem{lem}[thm]{Lemma}
\newenvironment{pf}[1][Proof]{\noindent\textbf{#1.} }{ \hfill$\square$\medskip}
\DeclareMathOperator{\VR}{VR}
\DeclareMathOperator{\dist}{dist}
\DeclareMathOperator{\Coker}{Coker}
\DeclareMathOperator{\Ker}{Ker}
\DeclareMathOperator{\Img}{Im}
\renewcommand{\H}{\operatorname{H}}
\newcommand{\bu}{_{\bullet}}
\newcommand{\bue}{_{\bullet+\epsilon}}
\newcommand{\RR}{\mathbb{R}}
\newcommand{\C}{\mathcal{C}}
\newcommand{\argmin}{\operatorname{argmin}}
\tikzset{    
    mypoint/.style={
        circle,
        draw,
        inner sep=.3mm
        },  
    whitepoint/.style={
        fill=white, 
        mypoint
        },  
    blackpoint/.style={
        fill=black, 
        mypoint
        },  
    textnode/.style={
        text height=2.5ex, 
        text depth=1ex
        },  
    }
\newcounter{height}
\title{Persistent Cost of Lipschitz Maps}
\author{
  Francisco J. Gozzi\footnotemark[1] \and
  Manuela A. Cerdeiro\footnotemark[2] \footnotemark[3] \and
  Pablo E. Riera\footnotemark[2] \footnotemark[3]
}
\begin{document}

\maketitle

\begin{abstract}
A $1$-Lipschitz map between compact metric spaces $f\colon X\to Y$ induces a homomorphism of persistence modules on degree-$d$ Vietoris--Rips persistent homology. We define the persistent cost of $f$ from this induced homomorphism by quantifying the persistence carried by its kernel and cokernel modules.

We prove that the persistent cost controls the interleaving distance between the degree-$d$ Vietoris--Rips persistent homology modules of $X$ and $Y$. Moreover, we obtain an explicit upper bound for the persistent cost in purely metric terms. Finally, we give a self-contained proof of the stability of the persistent cost introducing a Gromov-Hausdorff type distance for maps between compact metric spaces.
\end{abstract}

\footnotetext[1]{Centro de Matemática, Computação e Cognição da Universidade Federal do ABC, SP, Brasil.}
\footnotetext[2]{Universidad de Buenos Aires. Facultad de Ciencias Exactas y Naturales. Departamento de Computación. Buenos Aires, Argentina.}
\footnotetext[3]{CONICET-Universidad de Buenos Aires. Instituto de Ciencias de la Compuatción (ICC). Buenos Aires, Argentina.}

\section*{Introduction}

Persistent homology is a fundamental tool in the emerging field of topological data analysis (TDA) that extracts a vast amount of metric information by encoding homological features across scales.
The richness of this output coupled with its explainability has yielded significant insights across a spectrum of disciplines, including biology and time series analysis, among others, c.f. \cite{otter2017roadmap, perea2015sliding, gardner2022toroidal}, and been integrated into learning pipelines, c.f. \cite{hensel2021survey, hofer2017deep}.

Given a metric space \( X \), the Vietoris--Rips filtration \( (\VR_s(X))_{s \geq 0} \) is constructed by including simplices whose vertex set has diameter at most \( s \geq 0 \). As the parameter increases, the complex evolves from a discrete set to the full power set of \( X \), encoding geometric information at multiple scales. Applying the homology functor in a fixed dimension \( d \) yields a persistence module: a sequence of vector spaces connected by linear maps that track the birth, persistence and death of homological features. 

While the TDA community has primarily focused on analyzing individual point clouds, the functorial nature of these constructions naturally leads us to consider maps between datasets.
Indeed, we advocate for a treatment of maps between spaces analogous to that of the spaces themselves, enabling us to address the geometry of functions from a persistent homology perspective.
In this context we shall consider the natural case of non-expanding, i.e., 1-Lipschitz, maps $f:X\to Y$ which induce morphisms between their Vietoris–Rips filtrations and hence homomorphisms of persistence modules, 
\[ f_{*}:\H_d(\VR\bu(X))\to \H_d(\VR\bu(Y)).\]

A homomorphism of persistence modules defines kernel, cokernel, and image modules via point-wise linear algebra, along with naturally induced maps.
The \enquote{size} of the modules $\Ker(F)$ and $\Coker(F)$, measured by their interleaving distance to the trivial module $\Delta$, serves as a measure of how far the map $F$ is from a persistent isomorphism. 
This leads us to define the \textit{persistent cost} of a $1$-Lipschitz function as the maximum interleaving distance of kernel and cokernel persistence modules to the diagonal, 
\begin{equation}\label{eq persistent cost}
\begin{array}{rcl}
C(f)   &  = & \max \big\{ d_I(\Ker(f), \Delta )~,~ d_I( \Coker(f), \Delta ) \big\}
\end{array}
\end{equation}

The persistent cost bounds the interleaving distance between the modules associated to $X$ and $Y$, by a technical improvement to the Induced Matching Theorem of U. Bauer and M. Lesnick in \cite{bauer2015induced}, see Theorem \ref{thm lower bound}.

On the other hand, we can explicit a metric upper bound for the persistent cost of a non-expanding map $f:X\to Y$, in terms of its distortion and Hausdorff distance between its image subset and codomain, $f(X)\subseteq Y$. Altogether, we establish the following sequence of inequalities:
 \begin{equation}\label{eq main inequality}
d_{I} \left( \H_d (\VR\bu(X) ), \H_d (\VR\bu(Y) ) \right) ~\leq ~C(f) ~\leq~  \dist(f) + 2\cdot d_H(f(X), Y) 
 \end{equation}
We show that this bounds are sharp. 

From a purely algebraic perspective, as we range over all possible persistence module homomorphisms $F\bu:V\bu\to W\bu$, the infimum of the associated persistent cost attains the interleaving distance between the given modules. However, our interest lies in geometrically defined persistence homomorphisms, i.e. those induced by $1$-Lipschitz maps between the underlying spaces via the homology of their Vietoris-Rips filtrations. In this geometric context, we provide an example of two finite metric spaces that yield identical persistence diagrams while any 1-Lipschitz map is bounded away from zero in its associated persistent cost.

A related issue concerns the stability of kernel and cokernel persistence modules, which was claimed without proof in \cite{cohen2009persistent} as a corollary of the machinery developed in \cite{cohen2005stability}. 
We provide here a self-contained exposition of this fact introducing a Gromov-Hausdorff type distance for maps $f:X\to Y$, $f':X'\to Y'$, between compact metric spaces, that may be of independent interest.


\medskip
The paper is organized as follows. A first section provides a minimal exposition of the algebraic notions that allow us to define the persistent cost and prove the first inequality above, Theorem \ref{thm lower bound}. We then move on to the geometry of spaces and maps, in order to establish the metric upper bound on Theorem \ref{thm main 1 - metric bound} and stability on Theorem \ref{thm main 2 - stability}.

\section{Algebraic Persistence}

The main references for this section are the comprehensive book by S. Oudot in \cite{oudot2015persistence} and the papers on the Induced Matching Theorem \cite{bauer2015induced,bauer2020persistence} by U. Bauer and M. Lesnick.

\subsection{Persistence Modules}

A persistence module is a linear representation of $\RR$, or a given subset $T\subseteq \RR$, regarded as a poset. In other words, it is a functor
\begin{equation}\label{eqn defn pers module}
V: (\mathbb{R}, \leq) \to \mathrm{Vect}_\mathbb{F},
\end{equation}
where $\mathrm{Vect}_\mathbb{F}$ denotes the category of vector spaces over a field $\mathbb{F}$ with linear transformations as homomorphisms. 
Thus, a persistence module is more than just a one-parameter family of vector spaces; as a representation of a poset, it carries \enquote{inner persistence} or \enquote{transition} maps $\varphi_{s<t}: V_s \xrightarrow{~~} V_t$, for each pair $s<t \in \RR$, which satisfy:
\[ \varphi \circ \varphi = \varphi, \]
the last equation being read as $\varphi_{s_2<s_3} \circ \varphi_{s_1<s_2} = \varphi_{s_1<s_3}$.



Additional hypotheses are often imposed on persistence modules to ensure that their representation category has satisfactory structural properties. A possible choice is to restrict to pointwise finite-dimensional (p.f.d.) modules, which admit an interval decomposition due to the work in \cite{crawley2015decomposition}. A broader condition is $q$-tameness, requiring that each internal structure map $\varphi_{s<t}$ has finite rank.

As a basic example, let $(X,d)$ be a metric space. For each scale parameter $r\ge 0$, the Vietoris--Rips complex $\mathrm{VR}(X;r)$ is the abstract simplicial complex whose $k$-simplices are the finite subsets $\{x_0,\dots,x_k\}\subseteq X$ with $d(x_i,x_j)\le r$ for all $i,j$. Considering homology with coefficients in a fixed field $\Bbbk$ over the previous filtration, produces a persistence module which is clearly p.f.d. 
if $X$ is finite. If $X$ is compact, the module need not be p.f.d. in general, but it is $q$-tame, i.e, for each $r$ and each $\epsilon>0$ the space
\begin{equation}\label{eq epsilon approx of V}
V^{\epsilon}_r=\mathrm{im}\,\varphi_{r-\epsilon<r}    
\end{equation}
has finite dimension. 
In particular, $V^{\epsilon}\bu$ is a p.f.d. submodule of $V\bu$. 

Furthermore, there is a natural $\epsilon$-interleaving between a module $V\bu$ and its $\epsilon$-approximation $V^{\epsilon}\bu$, the pair of maps being given by the inclusion $V^{\epsilon}\bu\subseteq V\bu$ and inner persistence from $\varphi_{\bullet<\bullet+\epsilon}: V\bu \to V^{\epsilon}\bue$.

\subsection{Homomorphisms and Interleavings}
\smallskip
\begin{minipage}{0.685\textwidth}
\begin{defn} \label{def homomorphism}
A homomorphism $F: V\bu\to W\bu$ between persistence modules is a natural transformation between $V\bu$ and $W\bu$ regarded as functors.\end{defn}

In other words, a homomorphism is a collection of linear maps \enquote{$F\bu$}, at each scale, commuting with inner maps of each module, so that for $s<t$ we have the diagram to the side.    

\end{minipage}
\begin{minipage}{0.25\textwidth}
\[\xymatrix{ 
V_s \ar[r]^{\varphi^V_{s<t}} \ar[d]_{F_s} & V_{t} \ar[d]_{F_t} \\
W_s \ar[r]_{\varphi^W_{s<t}}  & W_t.
} \]
\end{minipage}


\smallskip
\noindent
\begin{minipage}{0.684\textwidth}
For $\epsilon \in \RR$, a $\epsilon$-degree homomorphism between given modules $V\bu$ and $W\bu$, is a standard persistence module homomorphism when the codomain is shifted by $\epsilon$, namely  $F: V\bu \to W_{\bue}$.  
\end{minipage}
\begin{minipage}{0.314\textwidth}
\[ \xymatrix{ 
V_s \ar[r]^{\varphi^V} \ar[dr]_{F_s} & V_{t} \ar[dr]_{F_t}& \\
&W_{s+\epsilon} \ar[r]_{\varphi^W}  & W_{t+\epsilon}.
} \]
\end{minipage}

\begin{defn}\label{defn interleaving}
 A $\delta$-interleaving between persistence modules $V_{\bu}$ and $W_{\bu}$ is a pair of degree $\delta>0$ homomorphisms
 $F:V\bu \to W\bu$, $G:W\bu \to V\bu$,
 such that the following diagrams commute:
\[
\xymatrix{ V_{\bullet} \ar[rr] \ar[dr]_{F} & & V_{\bullet+ 2\delta} 
&& V_{\bullet + \delta} \ar[dr]^{F} & \\
& W_{\bullet + \delta} \ar[ur]_{G} & 
& W_{\bullet} \ar[rr] \ar[ur]^{G} & & W_{\bullet+ 2\delta} 
}\]
\end{defn}
Whenever it is clear from context we omit indexing homomorphisms and naming inner persistence maps of each module. 

\begin{defn}
The interleaving distance between persistence modules $V\bu,W\bu$ is defined by:
\[d_I(V_{\bullet}, W_{\bullet}) = \inf\{\epsilon \geq 0: \text{there is  an $\epsilon$-interleaving between $V\bu$ and $W\bu$}\} \]
\end{defn}  
Though referred as \enquote{distance}, it is only an extended pseudo-distance, i.e., it may attain the value $+\infty$ and assign zero distance to different modules. 

The interleaving distance agrees with the Bottleneck distance between associated diagrams, in the interval decomposable case where the latter is defined. 
This result is known as the algebraic isometry theorem, see \cite{lesnick2015theory, oudot2015persistence}.  

\subsection{Persistent Cost}

Associated to a homomorphism $F: V\bu \to W\bu$, we can define persistence modules given as the point-wise kernel, cokernel, and image spaces, with persistence maps naturally induced from those of $V\bu$ and $W\bu$.  In particular, we retrieve an exact sequence of persistence modules given by:
\begin{equation} \label{eqn exact sequence ker coker}  
0 \to \Ker(F)\bu \xrightarrow{~~~}  V\bu \xrightarrow{~~F~~} W\bu  \xrightarrow{~~~} \Coker(F)\bu \to 0. \end{equation}

The size of the kernel and cokernel encode how far $F$ is from being a persistence isomorphism, motivating the following.
\begin{defn}
The persistent cost associated to a homomorphism $F:V\bu \to W\bu$ of persistence modules is 
the larger of the interleaving distances of the kernel and the cokernel to the trivial persistence module $\Delta$,  
\[C(F) = \max\left\{ d_I(\operatorname{Ker}(F), \Delta),\ d_I(\operatorname{Coker}(F), \Delta) \right\}.
\]
\end{defn}

We may now state our first contribution. 
\begin{thm}\label{thm lower bound}
Let $F:V\bu\to W\bu$ be a homomorphism between q-tame persistence modules, then its persistent cost bounds the interleaving distance between $V\bu$ and $W\bu$, i.e.,
\[ d_I(V\bu,W\bu)\leq C(F). \]
\end{thm}
\begin{pf}
If the modules are p.f.d. the statement follows as a corollary to the Induced Matching Theorem of U. Bauer and M. Lesnick in \cite{bauer2015induced}. 

Recall the auxiliary definition of $V^{\epsilon}\bu$ 
in \eqref{eq epsilon approx of V} and observe that a persistence homomorphism $F$ corestricts well to:
\[F^{\epsilon}:=F|: V^{\epsilon}\bu\to W^{\epsilon}\bu . \]
In particular from \cite{bauer2015induced} we have:
 \[ d_I( V^{\epsilon}\bu, W^{\epsilon}\bu) \leq C(F^{\epsilon}).  \]
We need only check that:
\[  d_I(\Ker(F), \Ker(F^{\epsilon}))= \epsilon  \quad,\quad   d_I(\Coker(F), \Coker(F^{\epsilon})= \epsilon,\]
where the pair of maps that give the desired interleavings are induced from those in \eqref{eq epsilon approx of V} between each module $V\bu$, $W\bu$ and its corresponding $\epsilon$-approximation
$V^{\epsilon}\bu$, $W^{\epsilon}\bu$.

A straight-forward triangular inequality gives:
 \[ d_I( V\bu, W\bu) \leq 2\epsilon + C(F^{\epsilon})\leq 4 \epsilon + C(F).  \]
Given that $\epsilon>0$ can be set up independently, the theorem follows.
\end{pf}

Notice that this bound is sharp, a simple example given by the null homomorphism from the trivial module into any other.

\section{On maps}
 A general reference for this section is the book by Oudot \cite{oudot2015persistence} with regards to persistence theory. Our metric arguments elaborate on classical metric geometry, for which we refer to \cite{burago2001metricgeometry}.

\subsection{Quasi Lipschitz maps}

Let us consider perturbations of $1$-Lipschitz functions as follows.
\begin{defn}
A function between metric spaces, $h:X\to Y$, is said to be an $\epsilon$-quasi $1$-Lipschitz map if it satisfies the following inequality:
\[ d_Y\big(h(x),h(x')\big)\leq d_X\big(x,x'\big)+\epsilon.\]
\end{defn}

The Vietoris-Rips construction is particularly well suited for our metric endeavors, as the following result shows.

\begin{lem}\label{lemma induced function on homology}
Let \( h : X \to Y \) be an \( \epsilon \)-quasi \( 1 \)-Lipschitz map. Then \( h \) induces an \( \epsilon \)-degree persistence homomorphism: $
h_* : H_d(\operatorname{VR}\bu(X)) \to H_d(\operatorname{VR}_{\bullet+\epsilon}(Y)).$

Moreover, if two \( \epsilon \)-quasi \( 1 \)-Lipschitz maps \( h_1, h_2 : X \to Y \) are within uniform distance \( \delta \geq 0 \), then they induce the same \( (\epsilon + \delta) \)-degree map on homology.
\end{lem}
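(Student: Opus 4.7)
The plan is to carry out everything at the simplicial level and then apply the homology functor at the end, so that both statements reduce to verifying concrete diameter bounds in $Y$.

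For the first assertion, I would fix $s \geq 0$ and check that $h$ induces a simplicial map $\VR_s(X) \to \VR_{s+\epsilon}(Y)$ by sending vertex sets to vertex sets. Indeed, if $\sigma = \{x_0,\ldots,x_k\}$ is a simplex of $\VR_s(X)$, i.e.\ $d_X(x_i,x_j) \leq s$ for all $i,j$, then the $\epsilon$-quasi $1$-Lipschitz hypothesis yields $d_Y(h(x_i),h(x_j)) \leq d_X(x_i,x_j) + \epsilon \leq s + \epsilon$, so $h(\sigma)$ is a simplex of $\VR_{s+\epsilon}(Y)$. These simplicial maps commute strictly with the inclusions $\VR_s(X) \hookrightarrow \VR_t(X)$ and $\VR_{s+\epsilon}(Y) \hookrightarrow \VR_{t+\epsilon}(Y)$ for $s<t$, so applying $\H_d(-,\FF)$ produces the desired $\epsilon$-degree persistence homomorphism.

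For the second assertion, I would invoke the classical fact that \emph{contiguous} simplicial maps induce identical maps on simplicial homology, where $f,g : K \to L$ are contiguous when $f(\sigma) \cup g(\sigma)$ is a simplex of $L$ for every simplex $\sigma$ of $K$. The task thus reduces to verifying contiguity of $h_1, h_2 : \VR_s(X) \to \VR_{s+\epsilon+\delta}(Y)$ at every scale $s \geq 0$. Given $\sigma = \{x_0,\ldots,x_k\} \in \VR_s(X)$, I would bound the diameter of $h_1(\sigma) \cup h_2(\sigma) = \{h_1(x_a)\}_a \cup \{h_2(x_a)\}_a$ by a single triangle-inequality step,
\begin{equation*}
d_Y(h_i(x_a), h_j(x_b)) \;\leq\; d_Y(h_i(x_a), h_i(x_b)) + d_Y(h_i(x_b), h_j(x_b)) \;\leq\; (s + \epsilon) + \delta,
\end{equation*}
valid for any $i,j \in \{1,2\}$ and any $a,b$. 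Hence $h_1(\sigma) \cup h_2(\sigma)$ is a simplex of $\VR_{s+\epsilon+\delta}(Y)$, the two maps are contiguous at the shifted scale, and they induce the same map on $\H_d$.

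There is no deep conceptual obstacle here: the entire argument is a careful bookkeeping exercise that turns the two sources of slack, namely $\epsilon$ coming from the quasi-Lipschitz defect and $\delta$ from the uniform proximity of $h_1$ and $h_2$, into diameter thresholds for the Vietoris-Rips complexes. The only point deserving attention is that the shift must add both contributions, so contiguity has to be checked at scale $s+\epsilon+\delta$ rather than at $s+\epsilon$, which is also where the statement's degree $\epsilon+\delta$ comes from.
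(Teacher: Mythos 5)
Your proposal is correct and follows essentially the same route as the paper: induce a simplicial map on the Vietoris--Rips complexes via the diameter bound, apply the homology functor, and for the second claim bound the diameter of $h_1(\sigma)\cup h_2(\sigma)$ by $s+\epsilon+\delta$ to get contiguity at the shifted scale, hence equal maps on homology. Your single triangle-inequality estimate cleanly subsumes the paper's two-case check, but the argument is the same.
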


\begin{proof}
The map \( h \) induces a simplicial map:
\[
\hat{h} : \operatorname{VR}_{\bullet}(X) \to \operatorname{VR}_{\bullet+\epsilon}(Y), \quad [x_0 \dots x_r] \mapsto [h(x_0) \dots h(x_r)],
\]
since a set \( \sigma \subseteq X \) of diameter \( \ell \) maps to a set \( h(\sigma) \subseteq Y \) of diameter at most \( \ell + \epsilon \). This commutes with inclusions and thus gives a homomorphism between filtrations. The homology functor yields the desired persistence homomorphism.

For the second claim, suppose \( d_Y(h_1(x), h_2(x)) \leq \delta \) for all \( x \in X \). For any simplex \( \sigma \in \operatorname{VR}_{\bullet}(X) \) of diameter \( \ell \), consider the union:
\[
h_1(\sigma) \cup h_2(\sigma) \subseteq Y.
\]
For any \( y, y' \in h_1(\sigma) \cup h_2(\sigma) \), we have:
- If both come from the same \( h_i \), then \( d(y, y') \leq \ell + \epsilon \).
- If \( y = h_1(x) \), \( y' = h_2(x') \), then:
\[
d(y, y') \leq d(h_1(x), h_2(x)) + d(h_2(x), h_2(x')) \leq \delta + (\ell + \epsilon).
\]
Thus, the diameter of \( h_1(\sigma) \cup h_2(\sigma) \) is at most \( \ell + \epsilon + \delta \). This implies that \( \hat{h}_1 \) and \( \hat{h}_2 \) are contiguous as maps:
\[
\operatorname{VR}_{\bullet}(X) \to \operatorname{VR}_{\bullet+\epsilon+\delta}(Y),
\]
and therefore induce the same map on homology.
\end{proof}

\begin{rem}
If \( f : X \to Y \) is a \( 1 \)-Lipschitz map, it induces a persistence homomorphism:
\[
f_* : H_d(\operatorname{VR}_{\bullet}(X)) \to H_d(\operatorname{VR}_{\bullet}(Y)).
\]
We may then define the \emph{persistent cost} of \( f \) as:
\[
C(f) := C(f_*) = \max\left\{ d_I(\operatorname{Ker}(f_*), \Delta),\ d_I(\operatorname{Coker}(f_*), \Delta) \right\},
\]
where \( \Delta \) is the trivial persistence module.

As natural examples one should consider the isometric inclusion of a sample withing a larger space $S\subseteq Y$, a clustering assignment $p: X\to X/\sim$ with a convenient metric or, in fact, any map $f:X\to Y$ between finite metric spaces as long as the distance of the target space $Y$ is conformally adjusted to ensure that the 1-Lipschitz condition is satisfied. 
\end{rem}


\begin{defn}
    A correspondence is a relation between two sets, $\C\subseteq X\times Y$, that relates every element of the factors, i.e., such that $\pi_X(\C)=X$, $\pi_Y(\C)=Y$.
\end{defn}
When these spaces $X,Y$, are metric one can define the distortion associated to a correspondence as the supremum of the difference between the distances of related pairs, i.e., 
\begin{equation}\label{eq distortion defn}
dist(\C)= sup \Bigg\{ |d(x,x')-d(y,y')|  ~:~(x,y),(x',y')\in \C \Bigg\}    
\end{equation}
Within this framework we may state the Gromov-Hausdorff classical stability and formulate the proof of our main theorem.  
\begin{defn} 
Given metric spaces $X$, $Y$, the Gromov-Hausdorff distance between them is defined by, 
\[ d_{GH}(X,Y) = \frac{1}{2} \, \underset{\C}{\inf} \{\!~dist(\C)\!~\}, \]
where the infimum is taken over all possible correspondences. 
\end{defn}
The alternative, albeit original, definition is to consider the infimum of the Hausdorff distance between isometric copies of $X,Y$:  
\[ d_{\mathrm{GH}}(X,Y) = \inf_{Z,f,g} d_{\mathrm{H}}^Z(f(X), g(Y)), \]
where the infimum ranges over isometric embeddings $f: X \to Z$, $g: Y \to Z$ into a common ambient space $Z$.

The Gromov-Hausdorff distance is a distance when we consider compact metric spaces up to isometry. 

\begin{cor}  
A correspondence with $\epsilon$-distortion between metric space $X$ and $Y$ induces an \mbox{$\epsilon$-interleaving} of the persistence modules given by the d-homology of their Vietoris-Rips filtrations.
\end{cor}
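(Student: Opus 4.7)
The plan is to realise the correspondence by selecting subordinated functions in both directions, apply Lemma~\ref{lemma induced function on homology} to obtain $\epsilon$-degree homomorphisms, and then verify the two interleaving triangles by a contiguity argument.

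First, using the axiom of choice, I would select functions $f:X\to Y$ and $g:Y\to X$ with $(x,f(x))\in\C$ for every $x\in X$ and $(g(y),y)\in\C$ for every $y\in Y$. For any $x,x'\in X$, applying the distortion bound to the pairs $(x,f(x)),(x',f(x'))\in\C$ yields
\[ d_Y(f(x),f(x'))\leq d_X(x,x')+\epsilon, \]
so $f$ is $\epsilon$-quasi $1$-Lipschitz, and symmetrically for $g$. By the first half of Lemma~\ref{lemma induced function on homology} these induce $\epsilon$-degree persistence homomorphisms $f_*:\H_d(\VR\bu(X))\to\H_d(\VR_{\bu+\epsilon}(Y))$ and $g_*:\H_d(\VR\bu(Y))\to\H_d(\VR_{\bu+\epsilon}(X))$, which are the two candidate maps of the desired $\epsilon$-interleaving.

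Next, I would verify that $g\circ f$ is uniformly $\epsilon$-close to $\mathrm{id}_X$: since both $(g(f(x)),f(x))$ and $(x,f(x))$ lie in $\C$, the distortion bound applied to this very pair gives $d_X(g(f(x)),x)\leq\epsilon$ at once, and symmetrically $d_Y(f(g(y)),y)\leq\epsilon$. To conclude that $g_*\circ f_*$ coincides with the canonical inclusion $\varphi^X_{\bu<\bu+2\epsilon}$, I would rerun the contiguity argument of Lemma~\ref{lemma induced function on homology} specialised to $h_1=g\circ f$ and $h_2=\mathrm{id}_X$: a simplex $\sigma\subseteq X$ of diameter $\ell$ satisfies that $\sigma\cup g(f(\sigma))$ has diameter at most $\ell+2\epsilon$, because each of the pieces $\sigma$ and $g(f(\sigma))$ has diameter at most $\ell+2\epsilon$, while a mixed pair $(g(f(x)),x')$ is bounded by $d_X(g(f(x)),x)+d_X(x,x')\leq\ell+\epsilon$. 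Hence the two simplicial maps into $\VR_{\bu+2\epsilon}(X)$ are contiguous, inducing equal maps on homology, which is precisely the required interleaving identity. The symmetric triangle $f_*\circ g_*=\varphi^Y_{\bu<\bu+2\epsilon}$ follows by an identical argument in $Y$.

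The step I expect to require most care, and the one I would flag as the main technical obstacle, is the sharpness of the degree bound in the final step: a verbatim application of the second half of Lemma~\ref{lemma induced function on homology}, treating $g\circ f$ merely as a $2\epsilon$-quasi $1$-Lipschitz map at uniform distance $\epsilon$ from $\mathrm{id}_X$, would produce only a $3\epsilon$-degree identity, which is too weak for an $\epsilon$-interleaving. Exploiting that $\mathrm{id}_X$ is itself $0$-quasi $1$-Lipschitz and tracking the two quasi-Lipschitz parameters separately in the diameter estimate is what restores the sharp $2\epsilon$ bound.
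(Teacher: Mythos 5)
Your proposal follows essentially the same route as the paper's proof: choose functions $f:X\to Y$ and $g:Y\to X$ subordinated to the correspondence $\C$, observe they are $\epsilon$-quasi $1$-Lipschitz, and invoke Lemma \ref{lemma induced function on homology} to get the two $\epsilon$-degree homomorphisms. The paper leaves the commutativity of the interleaving triangles implicit, and your explicit contiguity argument---in particular the sharp $\ell+2\epsilon$ diameter bound for $\sigma\cup g(f(\sigma))$, which correctly avoids the lossy $3\epsilon$ degree one would get from a verbatim application of the lemma's second statement---supplies exactly the detail the paper omits, so your write-up is correct and in fact more complete.
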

\begin{pf}
Notice that associated to a correspondence $\C$ between spaces $X$ and $Y$ there is a choice of a function $h:X\to Y$ whose graph is a sub-relation of $\C$.
This map is an $\epsilon$-quasi $1$-Lipschitz map for $\epsilon=dist(\C)$ and, hence, by Lemma \ref{lemma induced function on homology} induces a map on homology as required. 

In turn, we retrieve a function from $Y$ to $X$, which together with the previous gives an interleaving.  
\end{pf}

\begin{thm}[Stability \cite{cohen2005stability}]  \label{thm GH stability}

Given compact spaces $X, Y$, their interleaving distance between the associated d-homology Vietoris-Rips persistence module is bounded by twice the Gromov-Hausdorff distance between the spaces themselves.
\[ d_I \big( \H_d(\VR\bu(X)), \H_d(\VR\bu(Y)) \big) \leq 2\,d_{GH}(X,Y). \]    
\end{thm}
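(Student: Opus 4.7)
The plan is to deduce this theorem essentially as a formal consequence of the preceding corollary, which already does the heavy lifting by translating a correspondence between metric spaces into an interleaving of persistence modules. Since the Gromov-Hausdorff distance is defined as half the infimum of distortions over correspondences, the statement should follow by combining these two ingredients and taking infima carefully.

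Concretely, fix any $\epsilon > 2\,d_{GH}(X,Y)$. By Definition \ref{defn gromov-haus distance}, there exists a correspondence $\C \subseteq X\times Y$ with $\operatorname{dist}(\C) < \epsilon$. The corollary stated just above the theorem then guarantees an $\epsilon$-interleaving between $\H_d(\VR\bu(X))$ and $\H_d(\VR\bu(Y))$, so that
\[
d_I\bigl(\H_d(\VR\bu(X)),\, \H_d(\VR\bu(Y))\bigr) \leq \epsilon.
\]
Taking the infimum over all such $\epsilon > 2\,d_{GH}(X,Y)$ yields the desired inequality.

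The one subtlety I would want to address explicitly is that for compact (and not merely finite) metric spaces the Vietoris-Rips homology modules need not be p.f.d., only q-tame. Here I would invoke the framework of Section \ref{subsection q-tame}: the interleaving distance and its relation to bottleneck distance extend to q-tame modules via the observable category, and the simplicial map induced by an $\epsilon$-quasi-1-Lipschitz function chosen as a section of the correspondence (as in the proof of the corollary) still produces the required $\epsilon$-degree homomorphism on homology in this broader setting. So the same argument passes through without essential change.

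The main obstacle is not conceptual but bookkeeping: one has to make sure the two homomorphisms retrieved from the correspondence (one from a choice of function $X\to Y$, the other from a choice of function $Y\to X$) genuinely satisfy the two commuting triangles of Definition \ref{defn interleaving}, which in turn reduces to the contiguity statement in Lemma \ref{lemma induced function on homology} applied to the compositions against the identity. Once this is spelled out, combined with the infimum argument above, the theorem follows.
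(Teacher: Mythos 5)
Your proposal is correct and takes essentially the paper's approach: the paper itself offers no written proof (it cites Cohen-Steiner--Edelsbrunner--Harer), but the corollary immediately preceding the theorem is set up precisely so that the bound follows by choosing, for each $\epsilon > 2\,d_{GH}(X,Y)$, a correspondence of distortion less than $\epsilon$ and taking the infimum, exactly as you do, with the q-tame/observable remark handling compactness. One small caution on your final paragraph: if you verify the interleaving triangles by quoting the second clause of Lemma \ref{lemma induced function on homology} with its stated constants for the pair $g\circ f$ and $\mathrm{id}_X$ (both are $2\epsilon$-quasi $1$-Lipschitz and $\epsilon$-close, so the lemma as stated only identifies them as $3\epsilon$-degree maps), you lose the sharp constant; instead redo the contiguity estimate directly for this pair, using $d(x, g(f(x'))) \leq \ell + \epsilon$ to see that $\sigma \cup g(f(\sigma))$ has diameter at most $\ell + 2\epsilon$, which recovers the genuine $\epsilon$-interleaving and hence the factor $2\,d_{GH}(X,Y)$.
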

This key result provides a central feature of persistent homology which is its stability, namely, the fact that the considered invariants are continuous on the inputs.

Let us list a few examples of naturally occurring correspondences between compact metric spaces $X,Y$ associated to an $\epsilon$-quasi $1$-Lipschitz function $f:X\to Y$.

\begin{ex}\label{example correspondences 1}
The co-restriction of a given map between metric spaces, $f|:X\to f(X)$, being automatically surjective is already a correspondence. Its distortion can be interpreted as the absolute error between the distances of inputs and outputs under $f$,  
    \[dist(f|)=sup_{x,x'}\big| d(f(x),f(x')) - d(x,x') \big| .\]
\end{ex}
 
\begin{ex}\label{example correspondences 2}
An inclusion of compact metric spaces $A\subseteq Y$ can be enlarged to a correspondence $C_{\iota}$ by adding all the pairs $(a,y)\in A\times Y$ for $a = argmin~ d(.,y)$. Its distortion is (sharply) bounded by twice the Hausdorff distance between $A$ and $Y$, i.e., 
\[dist(\C_{\iota})= max\big\{|d(a,a') - d(y,y')| : (a,y),(a',y') \in \C \big\}\leq 2\cdot d_{H}(A,Y).\]  
\end{ex}

\begin{ex}\label{example correspondences after function}
Given that a composition of correspondences is a correspondence, the previous two examples show that a any function $f:X\to Y$ between compact metric spaces can be enlarged as a relation to induce a correspondence between $X$ and $Y$:
\[\C_f=\{(x,y) \in X\times Y~:~ x=\argmin~d(f(.),y) \}.\]

Moreover, as a composition its distortion is bounded as follows:
\[ dist(\C_f) \leq dist(f)+2 \cdot d_{H}(f(X),Y).\]
\end{ex}

\medskip

\subsection{A metric upper bound} 

We now have the necessary framework to address the main topic of this work which is the persistent cost associated to a non-expanding map between compact metric spaces. 
We shall provide natural upper bounds and stability guarantees.

\begin{thm}\label{thm main 1 - metric bound} Let $f:X\to Y$ be a 1-Lipschitz map between compact metric spaces, its persistent cost $C(f)$ on the level of the Vietoris-Rips filtration $d$-dimensional homology satisfies the following inequality:
\[ C(f)~\leq~ dist(f)~+~2\cdot d_H(f(X), Y)    , \] 
in terms of the distortion of the original map and the Hausdorff distance between its image set and codomain.
\end{thm}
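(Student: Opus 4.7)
The plan is to build a ``pseudo-inverse'' $g : Y \to X$ for $f$ via the correspondence $\C_f$ of Example \ref{example correspondences after function}, and then apply Lemma \ref{lemma induced function on homology} twice to convert near-invertibility of $g$ on each side into vanishing of the inner persistence maps of $\Ker(f_*)$ and $\Coker(f_*)$ across a gap of $2\epsilon$, where $\epsilon := \dist(f) + 2\, d_H(f(X),Y)$. Example \ref{example correspondences after function} already records $\dist(\C_f) \le \epsilon$.

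First, for each $y \in Y$ I pick some $g(y) \in \argmin_{x \in X} d(f(x),y)$, which exists by compactness of $X$. Its graph is contained in $\C_f$, so $g$ is $\epsilon$-quasi $1$-Lipschitz. Two direct distortion computations then show that $g \circ f$ is within uniform distance $\epsilon$ of $\mathrm{id}_X$ --- using that both $(x, f(x))$ and $(g(f(x)), f(x))$ lie in $\C_f$ --- and that $f \circ g$ is within distance $d_H(f(X),Y) \le \epsilon$ of $\mathrm{id}_Y$, directly from the definition of $g$.

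The algebraic step follows. Each composite $g \circ f$ and $f \circ g$ is $\epsilon$-quasi $1$-Lipschitz, as is every identity (trivially). Lemma \ref{lemma induced function on homology} with common quasi-Lipschitz constant $\epsilon$ and uniform distance $\epsilon$ then yields the identifications
\[
\iota^V_{\bu + \epsilon < \bu + 2\epsilon} \circ g_* \circ f_* \;=\; \iota^V_{\bu < \bu + 2\epsilon}, \qquad \iota^W_{\bu + \epsilon < \bu + 2\epsilon} \circ f_* \circ g_* \;=\; \iota^W_{\bu < \bu + 2\epsilon}.
\]
For the kernel bound, if $\alpha \in \Ker(f_*)_t$ then the left-hand side of the first identity annihilates $\alpha$ through $f_*$, hence $\iota^V_{t < t + 2\epsilon}(\alpha) = 0$, so the inner persistence map of $\Ker(f_*)$ at gap $2\epsilon$ vanishes and $d_I(\Ker(f_*), \Delta) \le \epsilon$. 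For the cokernel bound, naturality of $f_*$ rewrites the second identity as $\iota^W_{t < t + 2\epsilon}(\beta) = f_*\bigl(\iota^V_{t + \epsilon < t + 2\epsilon}(g_*(\beta))\bigr)$, placing $\iota^W_{t < t + 2\epsilon}(\beta)$ inside $\Im(f_*)_{t+2\epsilon}$, so the inner persistence map of $\Coker(f_*)$ at gap $2\epsilon$ also vanishes and $d_I(\Coker(f_*), \Delta) \le \epsilon$. Taking the max of the two bounds gives $C(f) \le \epsilon$.

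The main source of difficulty I anticipate is simply the shift bookkeeping: one must be careful that the degree-$\epsilon$ shift carried by $g_*$, combined with the post-shift produced by Lemma \ref{lemma induced function on homology}, lands exactly on the inner persistence map at gap $2\epsilon$; otherwise the factor in the bound could come out larger than $\epsilon$. Once the shifts are properly aligned, the remaining content is just naturality of $f_*$ and the standard fact that $d_I(V, \Delta)$ equals half the length of the longest bar of $V$.
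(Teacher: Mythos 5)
Your proposal is correct and follows essentially the same route as the paper: you build the reverse map $g$ subordinated to the correspondence $\C_f$ of Example \ref{example correspondences after function} (a choice of preimage of a nearest point in $f(X)$, exactly the paper's $g = f^{-1}\circ N$), obtain the factorized interleaving identities $\iota^V = \iota\circ g_*\circ f_*$ and $\iota^W = \iota\circ f_*\circ g_*$ via Lemma \ref{lemma induced function on homology}, and use them to kill the inner persistence maps of $\Ker(f_*)$ and $\Coker(f_*)$ across a gap of $2\epsilon$. Your version is simply more explicit about the shift bookkeeping and the closeness of $g\circ f$ and $f\circ g$ to the identities, which the paper leaves implicit.
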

Notice that this is a uniform bound holding for all dimensions in which the homology is computed, alike the classic stability result, Theorem \ref{thm GH stability}.

\begin{pf}
Our argument consists on the construction of a natural interleaving between $X\bu$ and $Y\bu$ associated to a $1$-Lipschitz map $f$ that in fact factors through $f_*$ and allows us to control kernel and cokernel modules.
This is the interleaving associated to the geometric correspondence defined in Example \ref{example correspondences after function}.

We have two maps subordinated to this correspondence: the original \(f \colon X \to Y\) and, in the reverse direction, a map \(g \colon Y \to X\) defined as \(g = f^{-1} \circ N\), where \(N \colon Y \to f(X)\) picks a nearest point in the image \(f(X)\) to a given \(y \in Y\), and \(f^{-1}\) denotes a choice of preimage under \(f\). We have a commuting a diagram:
\[ \xymatrix{ 
\H_d(\VR\bu(X)) \ar[rr]^{}  \ar[d]_{f_*} \ar[dr]_{}&  & \H_d(\VR_{\bullet+2\delta+4h}(X)) \\  
\H_d(\VR\bu(Y)) \ar[r]_{} & \H_d(\VR_{\bullet+\delta+2h}(Y))\ar[ur]_{g_*}  & 
} \]

\smallskip
Together with the following diagram, we have a $(\delta+2h)$-interleaving on homology.
\[\xymatrix{ 
& \H_d(\VR_{\bullet+\delta+2h}(X)) \ar[dr]^{~\varphi\circ f_*} &  \\  
\H_d(\VR\bu(Y))  \ar[rr]_{} \ar[ur]^{g_*~} & & \H_d(\VR_{\bullet+2\delta+4h}(Y))   
} \] 
\medskip

In order to consider (co)-kernel modules, we  recall the exact sequence \eqref{eqn exact sequence ker coker} associated to the persistence homomorphism $f_*$:
\[  0 \to \Ker\bu \xrightarrow{~~~}  \H_d( \VR\bu (X)) \xrightarrow{~~f_*~~} \H_d( \VR\bu (Y))  \xrightarrow{~~~} \Coker\bu \to 0. \] 
 
One should observe that the restriction of the diagonal maps coming from the interleaving co-restrict well to the kernel.

\[ \xymatrix{ 
\Ker_{\bullet} \ar[rr]^{} \ar[d]^{} &  & \Ker_{\bullet+2\delta+4h}\ar[d]^{} \\  
\H_d( \VR\bu (X)) \ar[rr]^{}  \ar[d]_{f_*} \ar[dr]_{}&  & \H_d( \VR_{\bullet+2\delta+4h} (X)) \\  
\H_d( \VR\bu (Y)) \ar[r]_{} & \H_d( \VR_{\bullet+\delta+2h}(Y)) \ar[ur]_{}  & 
} \]\vspace{0.7cm}

In doing so, we obtain an interleaving between the trivial module $\Delta$ and $\Ker$, using the fact that we have factored through the map $f_*$.
\
Moving on, we show that $\Coker$ is $\delta+2h$ interleaved with the trivial module, by proving that its inner persistence map $\varphi^{\Coker}_{\delta+2h}$ is trivial. 
This follows from the previous interleaving established between $\H_d( \VR\bu (X))$  and $\H_d( \VR\bu (Y))$, and the fact that this inner map is given by projection onto the cokernel, i.e., 
$\varphi^{\Coker}=\pi \circ \varphi^{Y}$.
It follows that:
\begin{equation}
\begin{array}{rl}
 \varphi^{\Coker}_{2\delta+4h} & = \pi \circ \varphi^Y_{2\delta+4h},\\[6pt]
  & = \pi \circ  (\varphi^Y\circ f_*\circ g_*),\\[6pt]
 &= \varphi^Y\circ (\pi \circ f_*)\circ g_*,\\[6pt]
 &= 0.\\
\end{array}
\end{equation}
Where we have employed the fact that projection onto the cokernel is a persistence module homomorphism in itself. 
\end{pf}

\subsection{Stability} \label{subsection stability}

Stability of kernel, cokernel and image persistence modules associated to a map has been claimed in \cite{cohen2009persistent} without proof but as a standard corollary to the techniques developed in \cite{cohen2005stability}.
We give a self-contained exposition of stability for the case of Vietoris-Rips filtrations.

\bigskip
\noindent\begin{minipage}{0.72\textwidth}
We define a Gromov-Hausdorff type distance for functions $f_1:X_1\to Y_1$, $f_2:X_2\to Y_2$, between possibly different (compact) spaces. 
In order to do so, we consider auxiliary $\epsilon$-isometries $S,S',T,T',$ between domains and codomains, respectively,  
 as in the diagram on the side.
\end{minipage}
\begin{minipage}{0.24\textwidth}
\centering
\[
\xymatrix{ 
X_1 \ar[r]^{f_1} \ar@/_/[d]_{S} & Y_1 \ar@/_/[d]_{T}\\
X_2 \ar[r]_{f_2} \ar@/_/[u]_{S'} & Y_2 \ar@/_/[u]_{T'}
}\]
\end{minipage}
\medskip

Composing the functions $f_i$, $i=1,2$, with suitable $\epsilon$-isometries we retrieve maps with a common domain and codomain and compute their uniform distance: 
$ d_{\infty}\big(f_2\circ S, T \circ f_1 \big)$  and $d_{\infty}\big(f_1\circ S',T' \circ f_2\big)$.

\begin{defn} 
For a pair of functions $f_1:X_1\to Y_1$, $f_2:X_2\to Y_2$ between compact metric spaces, consider:
\[
\begin{array}{ll}
d_{GH}(f_1,f_2) =  \underset{S,S',T,T'}{\inf} \max\Big\{&   
d_{\infty}\big(f_2\circ S, T \circ f_1 \big),~~ d_{\infty}\big(f_1\circ S',T' \circ f_2\big), \\
&\qquad\qquad  dist(S),~dist(S'),~dist(T),~dist(T')      \Big\} .
\end{array} \]
The infimum is taken over all possible almost isometries $S,S',T,T'$.
\end{defn}

\begin{rem} This is a pseudo-distance between functions $f_i:X_i\to Y_i$ defined on compact spaces. Moreover,
\begin{enumerate}

    \item it refines the Gromov-Hausdorff distance between respective domains and codomains, 
    \[  2\cdot \max \{   d_{GH}(X_1,X_2)~,~ d_{GH}(Y_1,Y_2)\} \leq d_{GH}(f_1,f_2) ;\]

    \item it is invariant under isometries, i.e., 
 \[ d_{GH}(f_1,f_2)= d_{GH}(f_1\circ S~,~T \circ f_2).\]
for true isometries $S:X_1\to X_2$, $T:Y_1\to Y_2$, between the respective spaces.
  
\item it induces a true distance on the classes of functions up to composition with isometries. 
\end{enumerate}
\end{rem}

To the best of our knowledge this is a new distance, although it is fairly common subject on the literature to build on the Gromov-Hausdorff formalism in order to define new distances, c.f., \cite{khezeli2023unified}.

\begin{lem}\label{lem bounding (co)kernel, images}
A pair of $1$-Lipschitz functions $f_i:X_i\to Y_i$, $i=1,2$, between compact metric spaces gives persistent homomorphisms $f_i:=(f_i)_*: \H_d(\VR\bu(X_i)) \to \H_d(\VR\bu(Y_i))$, 
whose induced kernel, image and cokernel persistence modules are 
$2\epsilon$-interleaved, provided the functions lay at distance $d_{GH}(f_1,f_2)\leq\epsilon$, so that: 
\[  \max\big\{d_I(\Ker(f_1),\Ker(f_2)),~ d_I(\Img(f_1),\Img(f_1)),~d_I(\Coker(f_1),\Coker(f_2)) ~\big\}\leq~ 2\cdot d_{GH}(f_1,f_2)\]
\end{lem}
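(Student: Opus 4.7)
The plan is to build $2\epsilon$-interleavings on each of the kernel, image, and cokernel persistence modules by first setting up an $\epsilon$-interleaving between the ambient $d$-homologies of the $\VR$ filtrations of $X_i$ and $Y_i$, and then restricting (or descending) to the sub/quotient modules cut out by $(f_i)_{*}$.

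Fix $\epsilon > d_{GH}(f_1,f_2)$ and pick almost-isometries $S: X_1 \to X_2$, $S': X_2 \to X_1$, $T: Y_1 \to Y_2$, $T': Y_2 \to Y_1$ realizing this bound. Each is $\epsilon$-quasi-$1$-Lipschitz by its distortion hypothesis, so Lemma~\ref{lemma induced function on homology} yields $\epsilon$-degree persistence homomorphisms $S_{*}, S'_{*}, T_{*}, T'_{*}$, and the pairs $(S_{*}, S'_{*})$ and $(T_{*}, T'_{*})$ assemble into $\epsilon$-interleavings of the ambient domain and codomain modules, exactly as in the corollary following that lemma. Both composites $f_2 \circ S$ and $T \circ f_1$ are $\epsilon$-quasi-$1$-Lipschitz (composition of a $1$-Lipschitz map with an $\epsilon$-quasi-$1$-Lipschitz one) and lie at uniform distance at most $\epsilon$, so the second assertion of the same lemma forces them to induce the same $2\epsilon$-degree map $\H_d(\VR_{\bullet}(X_1)) \to \H_d(\VR_{\bullet+2\epsilon}(Y_2))$. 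In symbols,
\[
\varphi^{Y_2}_{\epsilon} \circ T_{*} \circ (f_1)_{*} \;=\; \varphi^{Y_2}_{\epsilon} \circ (f_2)_{*} \circ S_{*},
\]
with the analogous identity holding for $f_1 \circ S'$ and $T' \circ f_2$. Commutation of the $f$-squares at a $2\epsilon$ shift is the key technical ingredient.

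With this in hand, a diagram chase restricts and descends to the sub/quotient modules. For the kernel, if $\alpha \in \Ker(f_1)_s$, the displayed identity gives $(f_2)_{*}\bigl(\varphi^{X_2}_{\epsilon} S_{*}(\alpha)\bigr) = \varphi^{Y_2}_{\epsilon}(f_2)_{*} S_{*}(\alpha) = \varphi^{Y_2}_{\epsilon} T_{*} (f_1)_{*}(\alpha) = 0$, so $\varphi^{X_2}_{\epsilon} \circ S_{*}$ restricts to a $2\epsilon$-degree map $\widetilde{S}: \Ker(f_1) \to \Ker(f_2)$; the reverse $\widetilde{S'}$ is produced symmetrically. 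For the cokernel, $\varphi^{Y_2}_{\epsilon} \circ T_{*}$ sends the image of $(f_1)_{*}$ into that of $(f_2)_{*}$, so by naturality of the quotient projection it descends to a $2\epsilon$-degree map $\overline{T}: \Coker(f_1) \to \Coker(f_2)$, and $\overline{T'}$ is obtained symmetrically. The image case follows either by restricting $\varphi^{Y_2}_{\epsilon} \circ T_{*}$ directly to $\Im(f_1) \to \Im(f_2)$ at a $2\epsilon$ shift, or by a five-lemma-style bookkeeping applied to the exact sequence~\eqref{eqn exact sequence ker coker}.

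The main obstacle is verifying that the restricted/descended maps compose to the expected $4\epsilon$-shift inner persistence maps on each of $\Ker(f_i)$, $\Im(f_i)$, and $\Coker(f_i)$, thereby closing the interleaving identity. This reduces to the analogous property for the ambient pairs $(S_{*}, S'_{*})$ and $(T_{*}, T'_{*})$, which in turn rests on $S' \circ S$ and $S \circ S'$ (and the $T$-counterparts) being within $\epsilon$ of the identity on $X_1, X_2$ (resp. $Y_1, Y_2$) — a property implicit in the correspondence-flavored reading of Definition~\ref{defn gromov-haus distance} and which one may wish to state as an explicit condition in the infimum before fully rigorizing the proof.
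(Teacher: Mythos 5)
Your proposal is correct and follows essentially the same route as the paper: fix almost-isometries $S,S',T,T'$, use Lemma~\ref{lemma induced function on homology} (contiguity of $f_2\circ S$ and $T\circ f_1$) to get the $2\epsilon$-shifted commuting square, then restrict to kernels, co-restrict to images, and descend to cokernels, with the $\epsilon'\searrow d_{GH}(f_1,f_2)$ infimum argument closing the bound. The approximate-inverse property you flag ($S'\circ S$ and $S\circ S'$ within $\epsilon$ of the identities) is indeed what the paper implicitly uses when it takes $S,S',T,T'$ subordinate to correspondences $C_X, C_X^t, C_Y, C_Y^t$ and declares the ambient pair ``clearly'' an interleaving, so your suggestion to make it explicit in Definition~\ref{defn gromov-haus distance} is well taken but not a defect of your argument.
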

\begin{pf}
Fix $\epsilon$-isometries $S,S',T,T',$  associated to the given correspondences: $C_X,C_X^t,C_Y,C_Y^t$, respectively, and such that $ d_{\infty}\big(f_2\circ S, T \circ f_1 \big), ~d_{\infty}\big(f_1\circ S',T' \circ f_2\big) \leq \epsilon$. 

Lemma \ref{lemma induced function on homology} and the fact that $f_2\circ S$ and $T\circ f_1$ are both $\epsilon$-quasi Lipschitz maps at $\epsilon$ uniform distance from each other yield the following commutative diagram:

\begin{equation}\label{diagram epsilon commutativity}
\xymatrix{
\H_d(\VR\bu(X_1)) \ar[d]^{(f_1)_*} \ar[r]_{S_*} & \H_d(\VR_{\bullet+\epsilon}(X_2)) \ar[r]_{\varphi^{X_2}_{\epsilon}}  & \H_d(\VR_{\bullet+2\epsilon}(X_2)) \ar[d]^{(f_2)_*} \\
\H_d(\VR\bu(Y_1)) \ar[r]_{T_*} & \H_d(\VR_{\bullet+\epsilon}(Y_2)) \ar[r]_{\varphi^{X_1}_{\epsilon}} & \H_d(\VR_{\bullet+2\epsilon}(Y_2)).
}\end{equation}

In particular, $\varphi^{X_2}_{\epsilon}\circ S_*$ sends $\Ker((f_1)_*)$ into $\Ker((f_2)_*)$, since
$(f_2)_*$ and the corresponding $\varphi_{\epsilon}$ commute. 
Analogously, $\varphi^{X_1}_{\epsilon}\circ S'_*$ sends $\Ker((f_2)_*)$ into $\Ker((f_1)_*)$. 
Hence, the pair $\varphi^{X_2}_{\epsilon}\circ S_*$ and $\varphi^{X_1}_{\epsilon}\circ (S')_*$ co-restricts well to give a  $2\epsilon$-interleaving between the kernels, since the original pair $S_*,S'_*$ was clearly an $\epsilon$-interleaving between the modules $\H_d(\VR\bu(X_i))$, $i=1,2$, corresponding to the domains. 

Finally, a technicality. One should consider the previous argument for a sequence of $\epsilon'>\epsilon$ converging to the true value $\epsilon=d_{GH}(f_1,f_2)$. Since the interleaving distance is also defined as an infimum, the lemma follows.  

\smallskip
We omit the proof for the image and cokernel persistence modules, which make use of $\epsilon$-isometries $T,T',$ and the inner persistence maps on the codomains.
\end{pf}

As a corollary to the previous proof we have the following.

\begin{cor}\label{cor bounding (co)kernel, images for almost lipschiztz}
A pair of $\delta$-quasi $1$-Lipschitz functions $f_i:X_i\to Y_i$, $i=1,2$, between compact metric spaces give persistent homomorphisms $f_i:=(f_i)_*: \H_d(\VR\bu(X_i)) \to \H_d(\VR\bu(Y_i))$, 
whose induced kernel, image and cokernel persistence modules are 
$2\epsilon+\delta$-interleaved, provided the functions lay at distance $d_{GH}(f_1,f_2)\leq \epsilon$, so that: 
\[  \max\big\{d_I(\Ker(f_1),\Ker(f_2)),~ d_I(\Img(f_1),\Img(f_1)),~d_I(\Coker(f_1),\Coker(f_2)) ~\big\}\leq~ 2\cdot d_{GH}(f_1,f_2)+\delta \]
\end{cor}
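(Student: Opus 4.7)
The plan is to retrace the proof of Lemma \ref{lem bounding (co)kernel, images} essentially verbatim, with the single structural modification dictated by Lemma \ref{lemma induced function on homology}: the induced homomorphisms $(f_i)_*$ now carry a $\delta$-degree shift, taking the form $(f_i)_* : \H_d(\VR\bu(X_i)) \to \H_d(\VR_{\bu + \delta}(Y_i))$. The extra $\delta$ in the bound will be inherited from this degree shift, since the image and cokernel modules naturally live at scale $t + \delta$ in the codomain filtration rather than at scale $t$.

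Concretely, I would first fix $\epsilon'$-isometries $S, S', T, T'$ realising some $\epsilon' > \epsilon \geq d_{GH}(f_1,f_2)$, exactly as in the lemma. Next I would write down the analog of diagram \eqref{diagram epsilon commutativity}, where the bottom row of $Y_i$-modules is shifted by $\delta$: the vertical arrows $(f_i)_*$ now map into $\H_d(\VR_{\bu+\delta}(Y_i))$, and commutativity up to the inner persistence map $\varphi^{Y_2}_{\epsilon'}$ follows from the second half of Lemma \ref{lemma induced function on homology} applied to the compositions $T \circ f_1$ and $f_2 \circ S$, both being $(\delta + \epsilon')$-quasi $1$-Lipschitz maps from $X_1$ to $Y_2$ at uniform distance at most $\epsilon'$. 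Then, exactly as in the lemma, the compositions $\varphi^{X_2}_{\epsilon'} \circ S_*$ and $\varphi^{X_1}_{\epsilon'} \circ S'_*$ co-restrict to the kernels and yield a $2\epsilon'$-interleaving, while $T_*, T'_*$ combined with suitable inner persistence maps to correct for the $\delta$-degree of $(f_i)_*$, descend to the image and cokernel modules and yield a $(2\epsilon' + \delta)$-interleaving. Finally, letting $\epsilon' \downarrow \epsilon$ in each estimate closes out the bound.

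The only subtle step, and the one I would expect to require the most care, is tracking exactly how the $\delta$ shift propagates to the image and cokernel indexing. The image $\Im((f_i)_*)\bu$ is a submodule of $\H_d(\VR_{\bu+\delta}(Y_i))$, and $\Coker((f_i)_*)\bu$ is a quotient of the same; the natural re-indexing of these sub/quotient objects as ordinary $\RR$-indexed persistence modules introduces a $\delta$ offset with respect to the ambient filtration on $Y_i$. Verifying that the interleaving maps inherited from $T_*, T'_*$ descend correctly under this re-indexing is precisely what materialises the additional $\delta$ in the bound, while the remainder of the argument is routine diagram chasing identical to that of the preceding lemma.
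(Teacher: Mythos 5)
Your proposal is correct and follows exactly the route the paper intends: the paper offers no separate argument, stating the corollary as an immediate consequence of the proof of Lemma \ref{lem bounding (co)kernel, images}, and your retracing of that proof with the $\delta$-degree shift from Lemma \ref{lemma induced function on homology} (the composites $T\circ f_1$, $f_2\circ S$ now being $(\delta+\epsilon)$-quasi $1$-Lipschitz, hence inducing the same $(2\epsilon+\delta)$-degree map) is precisely that adaptation. Your accounting of where the $\delta$ enters via the re-indexing of image and cokernel is at least as detailed as the paper's own treatment, which omits those cases entirely.
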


\begin{thm}\label{thm main 2 - stability}
The persistent cost associated to $1$-Lipschitz functions between compact metric spaces $f,g:X\to Y$ is stable with respect to the Gromov-Hausdorff distance between maps, more precisely, we have: 
\[ |C(f)-C(g)| \leq 2\cdot d_{GH}(f,g).  \]
\end{thm}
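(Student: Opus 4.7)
The plan is to combine Lemma \ref{lem bounding (co)kernel, images} with the triangle inequality for the interleaving distance and the fact that the $\max$ operation is $1$-Lipschitz in each coordinate. Concretely, the persistent cost is the $\max$ of two quantities, each of which is an interleaving distance from a kernel/cokernel module to the trivial module $\Delta$, so controlling the kernel and cokernel modules of $f$ and $g$ up to interleaving will transfer directly into a control of $C(f)$ versus $C(g)$.

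First, I would note that by Lemma \ref{lem bounding (co)kernel, images} applied to the $1$-Lipschitz maps $f, g \colon X \to Y$ (the domain and codomain happening to coincide), we have the bounds
\[ d_I\bigl(\Ker(f_*), \Ker(g_*)\bigr) \leq 2 \cdot d_{GH}(f,g), \qquad d_I\bigl(\Coker(f_*), \Coker(g_*)\bigr) \leq 2 \cdot d_{GH}(f,g). \]
Then I would invoke the triangle inequality for the interleaving distance on each coordinate, which yields
\[ d_I(\Ker(f_*), \Delta) \leq d_I(\Ker(g_*), \Delta) + 2 \cdot d_{GH}(f,g), \]
and the analogous inequality for the cokernel. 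Taking the maximum over both inequalities and using that $\max(a_1, a_2) \leq \max(b_1, b_2) + \epsilon$ whenever $a_i \leq b_i + \epsilon$ gives
\[ C(f) \leq C(g) + 2 \cdot d_{GH}(f,g). \]

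By symmetry of the argument in the roles of $f$ and $g$, the reverse inequality also holds, and the two together yield $|C(f) - C(g)| \leq 2 \cdot d_{GH}(f,g)$, which is the stated Lipschitz stability.

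I do not anticipate a genuine obstacle here, since all the heavy lifting has been front-loaded into Lemma \ref{lem bounding (co)kernel, images}. The only small point worth checking carefully is that the triangle inequality for $d_I$ indeed applies to $\Delta$ and to the (possibly q-tame) kernel and cokernel modules; this is standard for interleaving distances on persistence modules and does not require any new argument. The proof is therefore essentially a two-line combinatorial consequence of the stability of kernels and cokernels.
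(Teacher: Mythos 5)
Your proof is correct and takes essentially the same route as the paper: both arguments rest entirely on Lemma \ref{lem bounding (co)kernel, images} together with the triangle inequality for $d_I$. The only cosmetic difference is that the paper packages the two modules as $\Ker\oplus\Coker$ so that a single triangle inequality suffices, whereas you argue coordinatewise on kernel and cokernel and then take maxima, which is equivalent.
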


\begin{pf}
\[ \begin{array}{rl}
|C(f)-C(g)| & =\big| \max\{d_I(\Ker(f),\Delta), d_I(\Coker(f),\Delta)\} - \max\{d_I(\Ker(g),\Delta), d_I(\Coker(g),\Delta)\} \big|  \\[8pt]
& = \big|d_I(\Ker\oplus\Coker(f),\Delta) - d_I(\Ker\oplus\Coker(g),\Delta)\big| \\[8pt]
& \leq d_I\big(\Ker\oplus\Coker(f),\Ker\oplus\Coker(g)\big) \\[8pt]
& \leq max\{~d_I(\Ker(f),\Ker(g)) ~,~ d_I(\Coker(f),\Coker(g)) ~\} \\[8pt]
& \leq 2\cdot d_{GH}(f,g).
\end{array} \]
    We have sequentially applied the triangular inequality and previous Lemma \ref{lem bounding (co)kernel, images} on the final step.

\end{pf}

\subsection{A metric example}

The interleaving distance between persistence modules can be characterized via persistence homomorphisms: two persistence modules $V_{\bullet}$ and $W_{\bullet}$ are $\epsilon$-interleaved if and only if there exists an $\epsilon$-degree persistence homomorphism $F: V_{\bullet} \to W_{\bullet+\epsilon}$ with persistent cost $C(F) = 2\epsilon$ (\cite{bauer2015induced}, Remark 6).

A natural question is whether such optimal homomorphisms can be realized geometrically, i.e., as maps induced by 1-Lipschitz or \(\epsilon\)-quasi 1-Lipschitz functions between the underlying metric spaces. The following example shows that this is not always possible, i.e., the persistent cost of any 1-Lipschitz induced map may be larger than the algebraically optimal value.

\bigskip
\noindent\textbf{Example.}
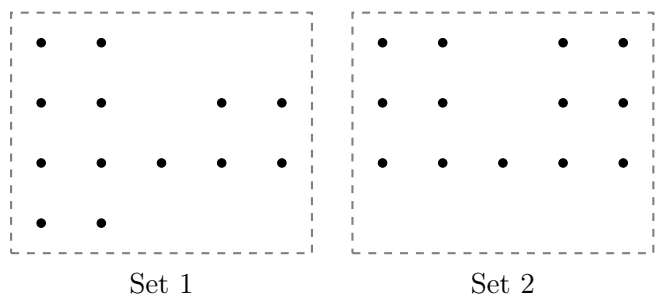
\begin{figure}[ht] 
\centering
\begin{tikzpicture}[scale=0.7]

\filldraw (0,0) circle (2pt);
\filldraw (0,-1) circle (2pt);
\filldraw (1,0) circle (2pt);
\filldraw (1,-1) circle (2pt);
\filldraw (0,1) circle (2pt);
\filldraw (0,-2) circle (2pt);
\filldraw (1,1) circle (2pt);
\filldraw (1,-2) circle (2pt);
\filldraw (2,-1) circle (2pt);
\filldraw (3,0) circle (2pt);
\filldraw (3,-1) circle (2pt);
\filldraw (4,0) circle (2pt);
\filldraw (4,-1) circle (2pt);
  
    \draw[thick, dashed, gray] (-0.5,-2.5) rectangle (4.5,1.5);
    \node at (2,-3) {Set 1};
\end{tikzpicture}
\quad
\begin{tikzpicture}[scale=0.7] 
  
\filldraw (0,-1) circle (2pt);
\filldraw (1,0) circle (2pt);
\filldraw (-1,0) circle (2pt);
\filldraw (2,0) circle (2pt);
\filldraw (-2,0) circle (2pt);
\filldraw (2,1) circle (2pt);
\filldraw (-2,1) circle (2pt);
\filldraw (2,-1) circle (2pt);
\filldraw (-2,-1) circle (2pt);
\filldraw (1,1) circle (2pt);
\filldraw (-1,1) circle (2pt);
\filldraw (1,-1) circle (2pt);
\filldraw (-1,-1) circle (2pt);
    \node at (0,-3) {Set 2};
    \draw[thick, dashed, gray] (-2.5,-2.5) rectangle (2.5,1.5);
\end{tikzpicture}
\label{fig 2sets}
\caption{Two sets 
with identical barcodes which do not admit a $1$-Lipschitz map with small persistent cost.}
\end{figure}
Consider the finite sets presented in Figure \ref{fig 2sets} and notice that they have the same diagram in persistent homology of arbitrary dimension. 
In fact, the \textit{dust phase} has the same 13 bars in $H_0$ that collapse into only one at scale $\epsilon=1$. 
On the $H_1$ level: four different classes are born at $\epsilon=1$ and die by $\epsilon=\sqrt{2}$. Higher dimensional homology is trivial. 

However, as an exercise on non-expanding functions, one can see that it is impossible to produce such a map with trivial or at even small persistent cost in dimensions $d=0$, nor in dimension $d=1$. In fact, any $1$-Lipschitz function defined in either direction between Set 1 and Set 2 will necessarily leave one of the \enquote{corner} points unmatched and hence introduce a defect in both $H_0$ and $H_1$ with positive persistence lifespan. 

Likewise, we cannot define an $\epsilon$-quasi $1$-Lipschitz map, with arbitrarily small $\epsilon>0$.

\bibliographystyle{alpha}
\bibliography{biblio}

\end{document}